\newtheorem{thm}{Theorem}[section]
\newtheorem{cor}[thm]{Corollary}
\newtheorem{prop}[thm]{Proposition}
\newtheorem{lem}[thm]{Lemma}
\newtheorem{Def}[thm]{Definition}
\newtheorem{rem}[thm]{Remark}
\newcommand{\be}{\begin{equation}}
\newcommand{\ee}{\end{equation}}
\newcommand{\ben}{\begin{enumerate}}
\newcommand{\een}{\end{enumerate}}
\newcommand{\beq}{\begin{eqnarray}}
\newcommand{\eeq}{\end{eqnarray}}
\newcommand{\beqn}{\begin{eqnarray*}}
\newcommand{\eeqn}{\end{eqnarray*}}
\title{On the Generalized Projective Riemann Curvature in Finsler Geometry}
\author{Nasrin Sadeghzadeh$^*$}
\newcommand{\acr}{\newline\indent}
\address{\llap{*\,}Department of Mathematics, University of Qom,\acr
Alghadir Bld, Qom, Iran}
\email{nsadeghzadeh@qom.ac.ir}
\author{Masoumeh Yaghoubi}
\address{Department of Mathematics, University of Qom,\acr
Alghadir Bld, Qom, Iran}
\email{massoumeh.yaghoubi@stu.qom.ac.ir}
\begin{document}
\maketitle
\begin{center}
\textbf{This is a preprint of an article currently under review.}
\end{center}

\section{Abstract}
This paper explores the generalized projective Riemann curvature in Finsler geometry, focusing on the properties of projectively equivalent Finsler metrics and the invariance of their curvature structures under projective transformations. We extend the existing frameworks of projective Riemann and Ricci curvatures by introducing new characterizations of quadratic curvature properties, highlighting their geometric significance in the broader context of Finsler manifolds. Our results provide novel insights into curvature behavior under generalized projective sprays and contribute to a deeper understanding of intrinsic geometric invariants within projective classes of Finsler spaces.
\\
\subjclass{MSC2020:} {53B40; 53C60}
\\
\keywords{Projective change, $GDW$-metrics, $R$-quadratic Finsler metrics, $GPR$-quadratic Finsler metrics.}

\section{Introduction}

The concept of projective changes between two Finsler spaces describes transformations that preserve the geodesic curves as point sets. This important notion has been extensively studied by many researchers \cite{BacsoMat-Proj, CuBingShen-Proj, ParkLee-Proj, Rapcsak-Proj}, revealing deep insights into the geometry and invariant quantities of Finsler spaces. Projective invariance plays a fundamental role in understanding equivalence classes of Finsler metrics and the intrinsic geometry of paths, which is central to the theory of Finsler structures \cite{Sh2}. These invariants are essential for applications in geometric analysis and mathematical physics, where the geometry of curves underlies phenomena such as geometric optics and variational problems \cite{ShenPRicFlat}. Notably, tensorial and scalar projective invariants were explored in \cite{Sinha, BidabadSepasi}, while \cite{SaMaRa} investigated projective invariants that remain stable under conformal changes of metrics.

Two Finsler metrics $F$ and $\bar{F}$ on a manifold $M$ are said to be projectively related if their geodesic coefficients satisfy the relation $G^i = \bar{G}^i + P y^i$, where $G^i$ and $\bar{G}^i$ are the geodesic coefficients and $P$ is the projective factor. The study of such metrics forms the core of projective Finsler geometry, exploring equivalence classes of metrics sharing the same geodesics as point sets and providing insights into the geometric structures invariant under re-parameterizations of geodesics \cite{ParkLee-Proj}. Unlike several curvature quantities such as the Douglas, Weyl, and Generalized Douglas-Weyl ($GDW$) curvatures, as well as some recently introduced projectively invariant curvatures presented in \cite{SadeghNewBirth}, many important curvature properties in Finsler geometry do not remain invariant under projective changes. Of particular interest is the class of $R$-quadratic Finsler metrics, which are not closed under projective changes, although they lie within the projectively closed class of Generalized Douglas-Weyl metrics. For example, as shown in \cite{NajBidTay}, \cite{Sh2}, the Randers metric
\[
F=|y|+\frac{\langle x,y \rangle}{\sqrt{1+|x|^2}},
\]
where $| \cdot |$ and $\langle \cdot,\cdot \rangle$ denote the Euclidean norm and inner product on $\mathbb{R}^n$, is of scalar flag curvature but not $R$-quadratic, despite being projectively related to an $R$-quadratic Euclidean metric.

Classically, the projective spray and the induced projective Riemann and Ricci curvatures in Finsler geometry are obtained by deforming the spray \(G\) with the projective factor \(P = \frac{S}{n+1}\), where \(S\) denotes the \(S\)-curvature defined relative to a fixed volume form on the manifold \cite{Sh2, ShenPRicFlat}. This choice guarantees that the constructed curvature tensors remain invariant under projective changes that preserve both the geodesic paths and the volume structure. The \(S\)-curvature plays a significant role beyond volume distortion, affecting the geometric flows and stability of curvature conditions in Finsler manifolds \cite{ShenPRicFlat, TabNajTay}.

A particularly important object in this context is the projective Ricci curvature, introduced by Z. Shen and collaborators , which refines the classical Ricci curvature by accounting for projective changes and volume forms. This curvature has remarkable invariance properties under projective transformations of the geodesic spray associated with a Finsler metric. Shen \cite{Sh2} introduced the projective Ricci curvature by considering the Ricci and $S$-curvatures associated with a Finsler metric $F$. It has been shown that if two Finsler metrics are point-wise projectively related on a manifold with a fixed volume form, then their projective Ricci curvatures coincide. In other words, projective Ricci curvature is invariant under projective transformations respecting a fixed volume form on the manifold.

This curvature can be regarded as a weighted Ricci curvature, a concept further extended in \cite{TabNajTay} through the notion of weighted projective Ricci curvature, offering refined tools to analyze the geometry of Finsler spaces.

Extensive studies have utilized projective Ricci curvature to characterize projectively Ricci-flat spaces, isotropic projective Ricci curvature, and properties based on projective Riemann curvature, significantly enriching the classification of Finsler manifolds \cite{ShenPRicFlat, IsoPRic, SadeghPR}. Recent advances, including weighted generalizations and novel curvature conditions \cite{TabNajTay}, \cite{ZhuPR}, \cite{ShenPRicFlat}, show that this classical framework can be extended by allowing the projective factor to be a more general homogeneous function \(\rho_G = \rho_G(x,y)\). This generalization captures richer geometric phenomena by incorporating anisotropies and alternative volumetric or curvature-related influences beyond the classical volume distortion encoded by \(S\).
This leads to generalized projective sprays whose geodesic re-parameterizations reflect more intricate curvature-driven behaviors, enabling refined projective invariants that better encode the complexity of Finsler geometries.

In fact, the Riemann and Ricci curvatures are fundamental geometric invariants in Riemannian geometry quantifying intrinsic curvature and volume distortion of manifolds. The Riemann curvature tensor measures directional changes of vectors after parallel transport along curves, encoding sectional curvature information. The Ricci curvature, obtained as a trace of the Riemann tensor, represents the average deformation of volume elements along geodesic flows, playing a central role in theoretical physics, especially in Einstein’s theory of general relativity. There, the Ricci curvature directly relates to the distribution of matter and energy via Einstein’s field equations. Additionally, sprays represent geometric objects that encapsulate geodesic equations as systems of second-order differential equations, serving as fundamental tools in geometric analysis and dynamical systems modeling physical trajectories.

Extending these notions to Finsler geometry leads to a more general and direction-dependent framework where metric functions vary with both position and direction. This anisotropy allows for modeling spaces with direction-dependent characteristics, reflecting more realistic physical situations such as anisotropic media and generalized space-time structures. In Finslerian settings, sprays and curvature tensors are naturally generalized to capture this directional dependence and inherent complexity.

Classically, projective sprays $G$, together with their associated projective Riemann and Ricci curvatures, have been defined by deforming spray structures using the $S$-curvature—an important volume distortion measure. These projective curvatures are invariant under projective changes and provide refined geometric invariants that are crucial for understanding volume-preserving geometric flows and anisotropic or generalized gravitational models.
This work further generalizes these constructions by replacing the volume-related $S$-curvature with a broader class of smooth, positively 1-homogeneous functions \(\rho_G \) dependent on the spray coefficients but not restricted to volume distortion factors. This generalization vastly expands the class of admissible projective sprays and their curvature invariants, introducing new geometric and physical possibilities. Physically, such a general \(\rho_G \) allows modeling anisotropic or inhomogeneous effects in media or space-time where directional dependencies or volume distortions are more complex than classical scenarios.
Consequently, the generalized projective Ricci curvature defined through \(\rho_G \) encodes refined curvature phenomena. This curvature can describe anisotropic transport properties, directional gravitational effects, and complex optical behavior in inhomogeneous media.
In a Finslerian generalization of general relativity, this generalized projective Ricci curvature could be related to the stress-energy tensor of anisotropic matter fields, providing a more refined geometric description of gravity compared to the standard Ricci tensor.
The model appears to have potential applications in cosmology (e.g., anisotropic dark matter models) or condensed matter physics.
This framework offers a powerful geometric tool set to explore and model intricate physical environments where classical isotropic geometries prove insufficient, paving the way for advancements in the mathematical modeling of direction-dependent physical theories and the extension of geometric analysis.

Consequently, the generalized projective Riemann and Ricci curvatures associated with these sprays provide a natural and powerful extension of classical projective invariants with broader applicability. They enable a deeper understanding of Finsler metrics exhibiting complex projective symmetries, anisotropic curvatures, and variable volume growth, significantly advancing the frontier of projective Finsler geometry.

In this paper, we propose a comprehensive generalized framework for the concept of projective Riemann curvature in Finsler geometry, aiming to unify and extend existing theories on projective Ricci and projective Riemann curvature in Finsler metrics. Building upon the foundational work of Z. Shen and others on projective Ricci curvature and its invariance properties , we develop new characterizations and structural results for generalized projective Riemann and Ricci quadratic metrics. Our approach introduces a novel class of generalized projective geodesic coefficients which preserve the projective Riemann curvature tensor under projective transformations, thereby deepening the understanding of geometrical invariants in the projective class of Finsler spaces.
Furthermore, we present several illustrative examples that effectively showcase the breadth and applicability of this generalization.

\noindent The main results include the following theorems.

\begin{thm}\label{Thm1}
A Finsler metric $F$ on $n$-dimensional manifold $M$ is generalized projectively $R$-quadratic ($GPR$-quadratic) with respect to $\rho_G$ if and only if
\be \label{GPR-quadEq}
PB_j{}^{i}{}_{kl|0} - \rho_{G\,.r} (PB_j{}^{r}{}_{kl}) y^{i}=0,
\ee
where $PB_j{}^{i}{}_{kl}$ is the Berwald curvature of projective spray $PG$ with respect to $\rho_G$.
\end{thm}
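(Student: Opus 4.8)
The plan is to read $GPR$-quadraticity as the assertion that the generalized projective Riemann curvature $PR^{i}{}_{k}$ of the projective spray $PG^{i}=G^{i}-\rho_G y^{i}$ is quadratic in $y$, and then to translate this quadraticity successively into a condition on the Berwald curvature $PB_j{}^{i}{}_{kl}$ of $PG$ taken along the geodesic flow of $PG$, and finally into the stated identity (\ref{GPR-quadEq}), which is written through the covariant derivative ${}|0$ of the underlying metric $F$. So the whole argument is a chain of equivalences, which settles both implications simultaneously.

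First I would invoke the spray-level characterization of $R$-quadraticity: for an arbitrary spray the Riemann curvature is quadratic in $y$ exactly when the geodesic covariant derivative of its Berwald curvature vanishes. Applied to the spray $PG$, this says that $F$ is $GPR$-quadratic if and only if $PB_j{}^{i}{}_{kl\|0}=0$, where $\|0$ denotes the horizontal covariant derivative attached to $PG$ contracted with $y$. This reduces the theorem to re-expressing $PB_j{}^{i}{}_{kl\|0}$ through the connection of $F$. (A point needing care here is that $PG$ is a spray but need not itself be metrizable, so the characterization must be applied at the spray level rather than as a statement about a Finsler metric.)

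The core computation is the comparison of the two Berwald connections. Writing $D^{i}_{jk}:=P\Gamma^{i}_{jk}-\Gamma^{i}_{jk}=\partial_{y^{j}}\partial_{y^{k}}(PG^{i}-G^{i})=-\bigl(\rho_{G\,.j.k}\,y^{i}+\rho_{G\,.j}\delta^{i}_{k}+\rho_{G\,.k}\delta^{i}_{j}\bigr)$, I would expand the difference $PB_j{}^{i}{}_{kl\|0}-PB_j{}^{i}{}_{kl|0}$ as the horizontal-frame difference $[\delta^{PG}_{0}-\delta_{0}]PB_j{}^{i}{}_{kl}$ plus the four connection-coefficient contractions against $D^{i}_{r0}$, $D^{r}_{j0}$, $D^{r}_{k0}$, $D^{r}_{l0}$, where $D^{\cdot}_{\cdot 0}:=D^{\cdot}_{\cdot m}y^{m}$. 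The decisive tool is homogeneity (Euler's theorem): since $\rho_G$ is positively $1$-homogeneous, $\rho_{G\,.m}y^{m}=\rho_G$ and $\rho_{G\,.j.m}y^{m}=0$, whence $D^{i}_{r0}=-(\rho_{G\,.r}y^{i}+\rho_G\delta^{i}_{r})$; and since $PB_j{}^{i}{}_{kl}$ is $(-1)$-homogeneous, $y^{r}PB_r{}^{i}{}_{kl}=0$ and $y^{r}\partial_{y^{r}}PB_j{}^{i}{}_{kl}=-PB_j{}^{i}{}_{kl}$. These relations annihilate every term carried by the lower indices, and they make the $\rho_G\delta$-contractions cancel exactly against the frame difference, leaving the single surviving term $PB_j{}^{i}{}_{kl\|0}=PB_j{}^{i}{}_{kl|0}-\rho_{G\,.r}(PB_j{}^{r}{}_{kl})y^{i}$.

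Combining the two steps proves the theorem: $F$ is $GPR$-quadratic if and only if $PB_j{}^{i}{}_{kl\|0}=0$, which by the identity just obtained is precisely (\ref{GPR-quadEq}). I expect the main obstacle to be the bookkeeping of this conversion step, and in particular the verification that all contributions proportional to $\rho_G$ itself (as opposed to its vertical derivative $\rho_{G\,.r}$) cancel; it is exactly this cancellation, forced by the two homogeneity relations above, that produces the clean final form (\ref{GPR-quadEq}) with only the $\rho_{G\,.r}(PB_j{}^{r}{}_{kl})y^{i}$ correction surviving.
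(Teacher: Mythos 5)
Your proposal is correct and takes essentially the same route as the paper: the paper's own argument also rests on the identity $PB_j{}^{i}{}_{kl||0}=PB_j{}^{i}{}_{kl|0}-\rho_{G\,.r}\,(PB_j{}^{r}{}_{kl})\,y^{i}$, obtained by comparing the Berwald connections of $PG$ and $G$ through the same homogeneity-driven cancellations (its equations \eqref{horizontalBGbar0}--\eqref{GPRandEq}), combined with the contracted identity \eqref{Rie-BerId} applied to the spray $PG$. The only difference is bookkeeping: you treat $PB$ as a single tensor and state explicitly the spray-level equivalence between $R$-quadraticity and the vanishing of the geodesic derivative of the Berwald curvature, whereas the paper expands $PB$ through $B$ and the $\rho_G$-correction terms and leaves that equivalence implicit in its concluding step.
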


A direct consequence of the above theorem is the following corollary.

\begin{cor}\label{PR-quadCor} \cite{SadeRoleS-cur}
A Finsler metric on $n$-dimensional manifold $M$ is $PR$-quadratic if and only if
\[
D_j{^i}_{kl|0}=\frac{S_{.r}}{n+1}D_j{^r}_{kl} y^i.
\]
\end{cor}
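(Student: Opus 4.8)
The plan is to obtain Corollary \ref{PR-quadCor} purely as the classical specialization of Theorem \ref{Thm1}, corresponding to the distinguished choice of projective factor $\rho_G = \frac{S}{n+1}$, where $S$ denotes the $S$-curvature. First I would check that this choice is admissible within the hypotheses of Theorem \ref{Thm1}. Since the spray coefficients $G^i$ are positively $2$-homogeneous in $y$ and the distortion is positively $0$-homogeneous, the $S$-curvature is positively $1$-homogeneous, so $\rho_G = \frac{S}{n+1}$ is a smooth, positively $1$-homogeneous function of exactly the type used in the construction of the generalized projective spray $PG$. Thus Theorem \ref{Thm1} applies verbatim to this particular $\rho_G$.

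Next I would match the two notions of quadraticity and the two curvature tensors. By definition, a Finsler metric is $PR$-quadratic precisely when it is $GPR$-quadratic with respect to the classical factor $\rho_G = \frac{S}{n+1}$; and with this same choice the generalized projective spray reduces to the classical projective spray $PG^i = G^i - \frac{S}{n+1}\,y^i$, so that its Berwald curvature $PB_j{}^{i}{}_{kl}$ coincides with the tensor $D_j{^i}_{kl}$ appearing in the statement (the Berwald curvature of the classical $S$-projective spray, in the notation of \cite{SadeRoleS-cur}). These two identifications are immediate once the projective factor is fixed, because the two sprays then literally agree.

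With these correspondences in hand, the proof is a single substitution. Computing the vertical derivative of the projective factor gives $\rho_{G\,.r} = \frac{S_{.r}}{n+1}$, and inserting $PB_j{}^{i}{}_{kl} = D_j{^i}_{kl}$ together with this expression into the characterization \eqref{GPR-quadEq} of Theorem \ref{Thm1} yields
\[
D_j{^i}_{kl|0} - \frac{S_{.r}}{n+1}\, D_j{^r}_{kl}\, y^i = 0,
\]
which, after rearrangement, is exactly the asserted identity.

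The computations are routine, so the only point deserving care — and hence the main, if modest, obstacle — is the bookkeeping of conventions. One must confirm that the definition of $PR$-quadratic and of the tensor $D_j{^i}_{kl}$ used in \cite{SadeRoleS-cur} genuinely agree with the $\rho_G = \frac{S}{n+1}$ specializations of the $GPR$-quadratic notion and of $PB_j{}^{i}{}_{kl}$, respectively, paying attention to the placement of the factor $\frac{1}{n+1}$ and the sign and normalization in the deformation $G^i \mapsto G^i - \frac{S}{n+1}\,y^i$. Once these definitional matches are verified, the corollary follows from Theorem \ref{Thm1} with no further argument.
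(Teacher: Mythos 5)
Your proposal is correct and follows essentially the same route as the paper: specialize Theorem \ref{Thm1} to $\rho_G=\frac{S}{n+1}$, identify the Berwald curvature of the resulting classical projective spray with $D_j{}^i{}_{kl}$, and substitute $\rho_{G\,.r}=\frac{S_{.r}}{n+1}$ into \eqref{GPR-quadEq}. The one step you defer as ``bookkeeping of conventions'' --- that $PB_j{}^i{}_{kl}$ coincides with the Douglas tensor --- is precisely the computation the paper writes out, using $E_{jk}=\tfrac{1}{2}S_{.j.k}$ and \eqref{Bjikl} to get $PB_j{}^i{}_{kl}=B_j{}^i{}_{kl}-\tfrac{2}{n+1}\{E_{jk}\delta^i_l+E_{jl}\delta^i_k+E_{kl}\delta^i_j+E_{jk.l}y^i\}=D_j{}^i{}_{kl}$, so it is a short verification against the paper's own definition of $D$ rather than a purely notational identification.
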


In this paper, we utilize symbols such as \enquote{$._{}$} and \enquote{$|_{}$} to indicate vertical and horizontal derivatives concerning the Berwald connection. Furthermore, the subscript \enquote{${}_0$} signifies the contraction with respect to $y^m$, as indicated by the subscript \enquote{${}_{m}$}, while the notation \enquote{${}_{;m}$} represents the derivative taken with respect to $x^m$.

\section{Preliminaries}
A Finsler manifold is a smooth manifold \(M\) endowed with a Finsler metric \(F: TM \to [0,\infty)\), which is smooth on the slit tangent bundle \(TM \setminus \{0\}\). The fundamental tensor
\[
{\bf g}_y(u,v) := \frac{1}{2} \frac{\partial^2}{\partial s\, \partial t} \Big|_{s,t=0} F^2(y + s u + t v)
\]
is positive definite for all \(y \neq 0\) in \(T_x M\) and all \(u,v \in T_x M\) \cite{Sh2}. This tensor generalizes the Riemannian metric tensor by allowing direction dependence. If for each \(x\), \({\bf g}_y\) is independent of \(y\), then \(F\) reduces to a Riemannian metric at \(x\).

The geodesics of \((M,F)\) are the critical points of the energy functional associated to \(F\) and are locally minimizing curves up to re-parametrization \cite{Sh2}. The functions \(G^i\) define the \textit{geodesic spray} vector field
\[
G = y^i \frac{\partial}{\partial x^i} - 2 G^i \frac{\partial}{\partial y^i},
\]
whose integral curves project to geodesics on \(M\) \cite{Sh2}.

The Riemann curvature tensor \(R^i_k\) of the geodesic spray encodes the curvature of the Finsler structure.
\[
R^i_k = 2 \frac{\partial G^i}{\partial x^k} - \frac{\partial^2 G^i}{\partial x^m \partial y^k} y^m + 2 G^m \frac{\partial^2 G^i}{\partial y^m \partial y^k} - \frac{\partial G^i}{\partial y^m} \frac{\partial G^m}{\partial y^k},
\]
and satisfies antisymmetry and Bianchi identities \cite{Sh2}.
\[
R^i_{kl} = \frac{1}{3} (R^i_{k.l} - R^i_{l.k}), \quad R_j{}^i_{kl} = R^i_{kl.j}.
\]
A Finsler metric is called Berwaldian if the spray coefficients \(G^i\) are quadratic in the fiber variables \cite{Sh2}.

The Berwald curvature tensor is given by
\[
B_y(u,v,w) = B_j{}^i_{kl} u^j v^k w^l \frac{\partial}{\partial x^i},
\]
with components
\[
B_j{}^i_{kl} = \frac{\partial^3 G^i}{\partial y^j \partial y^k \partial y^l}.
\]
It satisfies the fundamental identity relating it to the Riemann curvature:
\be\label{Rie-BerId}
B_j{}^i_{ml|k} - B_j{}^i_{mk|l} = R_j{}^i_{kl.m},
\ee
where \(\ |\) and \(\ .m\) denote horizontal and vertical covariant derivatives with respect to the Berwald connection, respectively \cite{Sh2}.

The mean Berwald curvature, or \(E\)-curvature, is defined as
\[
E_{jk} := \frac{1}{2} B_j{}^m_{km},
\]
and Finsler metrics with zero Berwald curvature (\(B=0\)) are Berwald metrics, while those with zero mean Berwald curvature (\(E=0\)) are called weakly Berwald \cite{Sh3}. Isotropic mean Berwald curvature means
\[
E_{ij} = \frac{n+1}{2} c F^{-1} h_{ij},
\]
for some scalar function \(c=c(x)\), where \(h_{ij}\) is the angular metric.
The \(S\)-curvature measures the rate of change of the volume distortion along geodesics:
\[
S(x,y) := \left. \frac{d}{dt} \tau(\gamma(t), \dot{\gamma}(t)) \right|_{t=0},
\]
where \(\tau(x,y)\) is the distortion function for \(F\) and \(\gamma(t)\) is the geodesic with \(\gamma(0)=x\), \(\dot{\gamma}(0) = y\) \cite{Sh3}. A key formula relates the mean Berwald curvature and $S$-curvature by
\[
E_{ij} = \frac{1}{2} S_{.i.j},
\]
where the subscript \(".i"\) denotes differentiation with respect to \(y^i\) \cite{Sh2}.

The Douglas tensor \(D\) is defined by
\[
D_j{}^i_{kl} := B_j{}^i_{kl} - \frac{1}{n+1} \frac{\partial^3}{\partial y^j \partial y^k \partial y^l} \left( \frac{\partial G^m}{\partial y^m} y^i \right).
\]
This tensor is a fundamental projective invariant on the slit tangent bundle \(TM_0\) and is preserved under projective transformations of the spray \cite{Sh2}. Equivalently,
\[
D_j{}^i_{kl} = B_j{}^i_{kl} - \frac{2}{n+1} \left( E_{jk} \delta^i_l + E_{jl} \delta^i_k + E_{kl} \delta^i_j + E_{jk.l} y^i \right).
\]
Finsler metrics with vanishing Douglas tensor are called Douglas metrics. Those satisfying
\[
D_j{}^i_{kl|m} y^m = T_{jkl} y^i,
\]
for some tensor \(T_{jkl}\), are known as Generalized Douglas-Weyl (GDW) metrics, which are projectively invariant \cite{Sh2}.

\begin{lem} \cite{Sh2}
If two Finsler metrics \(F\) and \(\bar{F}\) are projectively related by \(G^i = \bar{G}^i + P y^i\), then their Riemann curvatures satisfy
\[
\bar{R}{}^i_k = R^i_k + E \delta^i_k + \tau_k y^i,
\]
where
\[
E = P^2 - P_{|m} y^m, \quad \tau_k = 3 (P_{|k} - P P_{.k}) + E_{.k},
\]
and \(P_{|k}\) is the horizontal covariant derivative of the projective factor \(P\) with respect to \(\bar{F}\).
\end{lem}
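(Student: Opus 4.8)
The plan is to prove the transformation law by direct substitution of the projective relation $G^i = \bar G^i + Py^i$ into the coordinate formula for the Riemann curvature $R^i_k$ recorded in the Preliminaries, and then to reorganize the output into its $\delta^i_k$- and $y^i$-components. First I would record the partial derivatives of $G^i$ forced by the relation. Since $P$ is positively homogeneous of degree one and $\bar G^i$ of degree two in $y$, differentiation gives
$$G^i_{.m} = \bar G^i_{.m} + P_{.m}y^i + P\delta^i_m, \qquad G^i_{.m.k} = \bar G^i_{.m.k} + P_{.m.k}y^i + P_{.k}\delta^i_m + P_{.m}\delta^i_k,$$
together with the analogous expansions of $\partial_{x^k}G^i$ and $\partial_{x^m}\partial_{y^k}G^i$. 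Substituting these into the four terms of $R^i_k$ and isolating the summands built purely from $\bar G$, one recognizes exactly the formula for $\bar R^i_k$; all remaining summands constitute the projective correction term relating $R^i_k$ and $\bar R^i_k$.

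The second step is to collapse this correction using Euler's relations $P_{.m}y^m = P$, $P_{.m.k}y^m = 0$, $\bar G^m_{.k}y^k = 2\bar G^m$, and $\bar G^i_{.m.k}y^m = \bar G^i_{.k}$. Under these, many summands cancel in pairs (for instance the two copies of $P\bar G^i_{.k}$ and of $\bar G^i P_{.k}$ produced by the third and fourth terms) while the rest combine into multiples of $P^2$, $PP_{.k}$, and the contractions $\bar G^m P_{.m}$, $\bar G^m P_{.m.k}$. After this reduction the correction has the required shape $E\delta^i_k + \tau_k y^i$, with the raw $\delta^i_k$-coefficient equal to $P^2 + 2\bar G^m P_{.m} - y^m\partial_{x^m}P$ and the raw $y^i$-coefficient equal to $2\partial_{x^k}P - y^m\partial_{x^m}P_{.k} + 2\bar G^m P_{.m.k} - P_{.m}\bar G^m_{.k} - PP_{.k}$.

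The decisive step is to rewrite these two raw coefficients intrinsically via the horizontal covariant derivative with respect to $\bar F$. Using $P_{|k} = \partial_{x^k}P - P_{.m}\bar G^m_{.k}$ and its contraction $P_{|0} = y^m\partial_{x^m}P - 2\bar G^m P_{.m}$, the $\delta^i_k$-coefficient reduces at once to $E = P^2 - P_{|m}y^m$. For the $y^i$-coefficient I would differentiate $E = P^2 - P_{|0}$ vertically, obtaining $E_{.k} = 2PP_{.k} - (P_{|0})_{.k}$ with $(P_{|0})_{.k} = \partial_{x^k}P + y^m\partial_{x^m}P_{.k} - 2\bar G^m_{.k}P_{.m} - 2\bar G^m P_{.m.k}$, and then check that $3(P_{|k} - PP_{.k}) + E_{.k}$ reproduces the raw $y^i$-coefficient term by term, yielding $\tau_k$.

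I expect this last matching to be the main obstacle, since it is the only point that does not follow from pure bookkeeping: one must pass correctly between the coordinate derivatives $\partial_{x^k}P,\ y^m\partial_{x^m}P_{.k}$ and the connection-adjusted quantities $P_{|k},\ (P_{|0})_{.k}$, and the combinatorial constant $3$ in $\tau_k$ emerges only after the term $-3P_{.m}\bar G^m_{.k}$ coming from $3P_{|k}$ partially cancels the $+2\bar G^m_{.k}P_{.m}$ produced by $E_{.k}$, leaving the single net contribution $-P_{.m}\bar G^m_{.k}$. Careful tracking of the homogeneity degree of $P$, and of the sign convention fixing which of $F,\bar F$ plays the role of the base spray, is essential throughout; everything else is routine differentiation.
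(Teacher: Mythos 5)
The paper itself offers no proof of this lemma; it is quoted from \cite{Sh2} without argument, so your proposal is to be measured against the standard computation in Shen's book, which it reproduces faithfully. Your bookkeeping is correct: substituting $G^i = \bar{G}^i + P y^i$ into the four-term formula for $R^i_k$ and using the Euler relations $P_{.m}y^m = P$, $P_{.m.k}y^m = 0$, $\bar{G}^m_{.k}y^k = 2\bar{G}^m$, $\bar{G}^i_{.m.k}y^m = \bar{G}^i_{.k}$ gives exactly the raw coefficients you state, namely $P^2 + 2\bar{G}^m P_{.m} - y^m\partial_{x^m}P$ on $\delta^i_k$ and $2\partial_{x^k}P - y^m\partial_{x^m}P_{.k} + 2\bar{G}^m P_{.m.k} - P_{.m}\bar{G}^m_{.k} - P P_{.k}$ on $y^i$; the cancellations you single out (the paired $P\bar{G}^i_{.k}$ and $\bar{G}^i P_{.k}$ terms from the third and fourth summands, and $-3P_{.m}\bar{G}^m_{.k} + 2P_{.m}\bar{G}^m_{.k} = -P_{.m}\bar{G}^m_{.k}$ in the assembly of $\tau_k$) are exactly right, and the identification $E = P^2 - P_{|m}y^m$, $\tau_k = 3(P_{|k} - P P_{.k}) + E_{.k}$ with horizontal derivatives taken relative to $\bar{F}$ checks out term by term.

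One point you should state outright rather than leave as a closing caveat about sign conventions: what your computation actually proves is
\[
R^i_k = \bar{R}^i_k + E\,\delta^i_k + \tau_k\, y^i,
\]
that is, the curvature of the deformed spray $G = \bar{G} + Py$ expanded against the base spray $\bar{G}$, whose connection defines $P_{|k}$. The lemma as printed asserts the transposed identity $\bar{R}^i_k = R^i_k + E\,\delta^i_k + \tau_k y^i$, which with these conventions differs by an overall sign of the correction (it would require replacing $E,\tau_k$ by $-E,-\tau_k$), and is therefore not what the computation yields; this is a defect in the printed statement, not in your argument. Indeed, the paper's own later use of the result, where $\bar{G}^i = G^i - \tfrac{S}{n+1}y^i$ and the horizontal derivative is taken with respect to the base metric $F$, employs precisely the orientation you derived (deformed curvature equals base curvature plus correction). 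So your proof is correct and is the canonical one; make the transposition explicit and note that the lemma should be read with $R^i_k$ and $\bar{R}^i_k$ interchanged.
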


A projective transformation with factor \(P\) is called C-projective if the tensor
\[
Q_{ij} = \frac{\partial Q_i}{\partial y^j} - \frac{\partial Q_j}{\partial y^i},
\]
with
\[
Q_i = \frac{\partial P}{\partial x^i} - G^m_{.i} \frac{\partial P}{\partial y^m} - P \frac{\partial P}{\partial y^i},
\]
vanishes. Under this condition, when \(P=-\frac{S}{n+1}\), we obtain the classical projective Riemann curvature \cite{Sh2}.
For a spray $G$ on an $n$-dimensional manifold $M$ and given a volume form $dV$ on $M$, we can construct a new spray by
\[
\tilde{G}:=G+\frac{2S}{n+1}Y.
\]
The spray $\tilde{G}$ associated with a Finsler metric $(M,F)$ and its corresponding volume form $dV$ is referred to as the projective spray. This terminology holds in both local and global contexts.
\be\label{PSpraylocal}
\widetilde{G}^i=G^i-\frac{S}{n+1}y^i.
\ee
The projective Ricci curvature of $(G, dV)$ is defined as the Ricci curvature of the associated projective spray $\tilde{G}$, namely,
\[
PRic_{(G, dV)}:=Ric_{\tilde{G}}.
\]
Then by a simple computation one has
\be\label{PRic}
PRic_{(G,dV)}=Ric+(n-1)\{\frac{S_{|0}}{n+1}+[\frac{S}{n+1}]^2\}.
\ee
where $Ric = Ric_G$ is the Ricci curvature of the spray $G$, $S=S_{(G,dV)}$ is the S-curvature of $(G,dV)$ and $S_{|0}$ is the covariant derivative of $S$  along a geodesic of $G$. It is worth noting that $\tilde{G}$ remains unchanged under a projective change of $G$ with $dV$ fixed. As a result, $PRic_{(G,dV)}=Ric_{\tilde{G}}$ is a projective invariant of $(G,dV)$.\\
Given a Finsler metric $(M,F)$, the Riemann curvature of a projective spray, $PR_y=PR{^i}_k \frac{\partial }{\partial x^i} \otimes dx^k$, is referred to as the projective Riemann curvature,
\[
{PR{^i}_k}_{(G,dV)}= {R{^i}_k}_{\widetilde{G}}.
\]
A Finsler metric $(M,F)$ is called $PR$-quadratic Finsler metric if $PR_j{^i}_{kl.m}=0$.

Throughout this paper, the symbols ${}_{.}$ and ${}_{|}$ denote the vertical and horizontal covariant derivatives with respect to the Berwald connection, respectively, while the subscript ${}_0$ indicates contraction by \(y^m\), that is, \({}_{|m} y^m\).

\section{Generalized Projective Riemann and Ricci curvatures}

In this section, we introduce new definitions of generalized projective sprays, and the associated Riemann and Ricci curvatures, tailored for Finsler manifolds. Our aim is to extend the classical notions of projective invariants in Finsler geometry. Traditionally, the projective Riemann and Ricci curvatures are defined using the classical projective spray constructed by deforming the geodesic spray with the projective factor \( P = \frac{S}{n+1} \), where \( S \) denotes the \(S\)-curvature associated to a fixed volume form on the manifold. However, the classical role of \(S\)-curvature as volume distortion measure may be generalized by considering alternative smooth 1-homogeneous functions that capture different geometric or physical characteristics of the manifold. By replacing the \(S\)-curvature with other suitable homogeneous functions, we can construct generalized projective sprays and curvature notions, thereby broadening the class of geometric structures that can be modeled. Such generalizations enable the exploration of diverse curvature behaviors beyond volume distortion, offering new insights into projective equivalences and geometric flows in Finsler spaces.

We proceed by formally defining the generalized projective Riemann curvature and the corresponding generalized projective Ricci curvature. These definitions allow us to capture finer geometric invariants that are stable under an extended family of projective transformations.
As is known \cite{Sh2, ShenPRicFlat}, a projective spray \(G\) on an \(n\)-dimensional manifold \(M\) remains invariant under projective changes. We denote such a spray, fixed under projective transformations, by \(PG\), and refer to it as the projective spray.

To generalize, let \(G\) be a spray on an \(n\)-dimensional manifold \(M\). We define a new spray by
\[
PG = G + 2 \rho_G Y,
\]
where \(Y = y^i \frac{\partial}{\partial y^i}\) is the vertical radial vector field on \(TM\), and \(\rho_G = \rho_G(x,y)\) is a smooth, 1-homogeneous function associated with \(G\). We seek the conditions on \(\rho_G\) such that \(PG\) remains invariant under all projective changes of \(G\).

In local coordinates, if \(\tilde{G}^i\) are the geodesic coefficients of a spray projectively related to \(G^i\) by the projective factor \(P=P(x,y)\), i.e.,
\[
\tilde{G}^i = G^i - P y^i,
\]
then it must hold that
\[
P \tilde{G}^i = PG^i,
\]
where by definition
\[
P \tilde{G}^i := \tilde{G}^i - \rho_{\tilde{G}} y^i.
\]
This equality is equivalent to the consistency condition
\[
\rho_{\tilde{G}} = \rho_G + P.
\]
We call \( PG \) a generalized projective spray associated with \( \rho_G \).

\begin{Def}\label{GPSprayDef}
Let \(G\) be a spray on an \(n\)-dimensional Finsler manifold \(M\). Suppose \(\rho_G = \rho_G(x,y)\) is a smooth, degree 1 homogeneous function. Then
\[
PG^i = G^i - \rho_G y^i
\]
defines a generalized projective spray with respect to \(\rho_G\) if and only if for every projective change
\[
\tilde{G}^i = G^i + P y^i,
\]
the compatibility condition
\[
\rho_G - \rho_{\tilde{G}} = P
\]
holds.
\end{Def}

There can be multiple generalized projective sprays associated with the same spray \(G\). For instance, the following two classes provide important examples.

\begin{prop}\label{rhoPropGS-cur}
Let \((M,F)\) be an \(n\)-dimensional Finsler manifold equipped with a volume form \(dV_F = \sigma(x) dx^1 \wedge \cdots \wedge dx^n\), and a fixed volume form \(dV_0 = \sigma_0(x) dx^1 \wedge \cdots \wedge dx^n\) associated with a reference Finsler metric \(F_0\). For any 1-homogeneous function \(U=U(x,y)\), the spray
\[
PG^i = G^i - \left( \frac{\mathbb{S}}{n+1} + U \right) y^i,
\]
defines a generalized projective spray, where \(\mathbb{S} = S + d \ln \left( \frac{\sigma_0}{\sigma} \right)\), with \(S\) the \(S\)-curvature corresponding to \(F\).
\end{prop}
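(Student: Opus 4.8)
The plan is to verify the compatibility condition from Definition~\ref{GPSprayDef} directly. Given the candidate
\[
\rho_G = \frac{\mathbb{S}}{n+1} + U, \qquad \mathbb{S} = S + d\ln\!\left(\frac{\sigma_0}{\sigma}\right),
\]
I must show that under an arbitrary projective change $\tilde{G}^i = G^i + P y^i$ one has $\rho_G - \rho_{\tilde{G}} = P$, where $\rho_{\tilde{G}}$ is the same expression built from the projectively changed spray. Since $U=U(x,y)$ is a fixed $1$-homogeneous function that does not depend on the spray, it contributes nothing to the difference, i.e.\ $U_{\tilde{G}} = U_G = U$. Thus the whole computation reduces to tracking how $\mathbb{S}$ transforms, and the problem collapses to showing
\[
\frac{\mathbb{S}_G - \mathbb{S}_{\tilde{G}}}{n+1} = P.
\]

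First I would recall the classical transformation rule for the $S$-curvature under a projective change with a \emph{fixed} volume form. When $\tilde{G}^i = G^i + P y^i$ and the volume form is held fixed, the $S$-curvatures satisfy $\tilde{S} = S - (n+1)P$ (this is exactly the content that makes the classical choice $P = S/(n+1)$ produce a projectively invariant spray, as used in \eqref{PSpraylocal}). The key subtlety here is that $\mathbb{S}$ carries an extra term $d\ln(\sigma_0/\sigma)$ coming from comparing the induced volume form $dV_F$ with the fixed reference $dV_0$. I would next argue that this correction term is \emph{projectively invariant}: the quantity $d\ln(\sigma_0/\sigma) = \frac{\partial}{\partial x^m}\ln(\sigma_0/\sigma)\, y^m$ depends only on the position-dependent densities $\sigma_0$ and $\sigma$, not on the spray coefficients $G^i$. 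Under a projective change the metric $F$ is unchanged (only the spray parametrization changes), so $\sigma$ is unchanged, $\sigma_0$ is fixed by hypothesis, and hence the correction term is identical for $G$ and $\tilde{G}$.

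Combining these two observations gives the result: the $d\ln(\sigma_0/\sigma)$ term cancels in the difference $\mathbb{S}_G - \mathbb{S}_{\tilde{G}}$, leaving only $S - \tilde{S} = (n+1)P$, so that
\[
\rho_G - \rho_{\tilde{G}} = \frac{\mathbb{S}_G - \mathbb{S}_{\tilde{G}}}{n+1} + (U - U) = \frac{S - \tilde{S}}{n+1} = P,
\]
which is precisely the compatibility condition of Definition~\ref{GPSprayDef}. I would close by verifying that $\rho_G$ is genuinely smooth and positively $1$-homogeneous in $y$: the $S$-curvature is $1$-homogeneous, $U$ is $1$-homogeneous by assumption, and $d\ln(\sigma_0/\sigma)$ is linear in $y^m$ hence $1$-homogeneous, so the sum has the required homogeneity and $PG^i = G^i - \rho_G y^i$ defines a legitimate spray.

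The main obstacle I anticipate is pinning down the correct sign and normalization in the transformation rule $\tilde{S} = S - (n+1)P$ relative to the sign convention $\tilde{G}^i = G^i + P y^i$ used in this paper (note the paper oscillates between $\tilde{G}^i = G^i + Py^i$ and $\tilde{G}^i = G^i - Py^i$), since an error there would flip the identity. The underlying geometric facts --- that $S$ shifts by a multiple of $P$ under a fixed-volume projective change, and that the density-ratio correction is spray-independent --- are standard, so the substance of the proof is essentially this bookkeeping rather than any deep new computation.
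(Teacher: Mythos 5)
There is a genuine gap, and it sits at the heart of the proposition. Your step asserting that the correction term $d\ln(\sigma_0/\sigma)$ is ``projectively invariant'' because \emph{``under a projective change the metric $F$ is unchanged (only the spray parametrization changes), so $\sigma$ is unchanged''} is false in the setting of this paper, and it assumes away exactly the difficulty that $\mathbb{S}$ was introduced to handle. Here a projective change relates two \emph{different} Finsler metrics $F$ and $\tilde{F}$ with $\tilde{G}^i = G^i + P y^i$; each metric carries its own induced volume density, $\sigma$ versus $\tilde{\sigma}$, and the transformed quantity $\rho_{\tilde{G}}$ must be built from $\tilde{S} = \tilde{G}^m_{.m} - d\ln\tilde{\sigma}$ together with the transformed correction $d\ln(\sigma_0/\tilde{\sigma})$. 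The paper's proof tracks precisely this: it writes
\[
\tilde{S} = \tilde{G}^m_{.m} - d\ln\tilde{\sigma} = G^m_{.m} + (n+1)P - d\ln\tilde{\sigma},
\]
and the content of the argument is that the change of the correction term under $\sigma \mapsto \tilde{\sigma}$ cancels the anomalous $d\ln(\sigma/\tilde{\sigma})$ part of $\tilde{S}-S$, so that $\tilde{\mathbb{S}} - \mathbb{S} = (n+1)P$ on the nose. Equivalently, $\mathbb{S}$ is (up to the paper's sign conventions) the $S$-curvature taken with respect to the \emph{fixed reference} volume form $dV_0$, which is the reason it obeys a clean transformation law even though $S_{(G,dV_F)}$ does not. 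Under your assumption $\tilde{\sigma}=\sigma$ the correction term is inert, and what you prove is the classical fixed-volume statement behind \eqref{PSpraylocal} plus a trivial additive constant --- a degenerate special case, not the proposition. Indeed, if the volume form really were held fixed there would be no reason to introduce $\sigma_0$ and $\mathbb{S}$ at all.

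Two secondary points. First, signs: with $\tilde{G}^i = G^i + Py^i$ and a genuinely fixed density, Euler's theorem gives $\tilde{G}^m_{.m} = G^m_{.m} + (n+1)P$, hence $\tilde{S} = S + (n+1)P$, not $S - (n+1)P$ as you state; you rightly flag that the paper's own sign conventions are inconsistent (Definition \ref{GPSprayDef} displays $\rho_G - \rho_{\tilde{G}} = P$ while its derivation and the proof of Proposition \ref{rhoPropGS-cur} conclude $\rho_{\tilde{G}} = \rho_G + P$), so this is forgivable bookkeeping, but your stated rule has the sign that does not follow from the computation. Second, the hypothesis on $U$: the paper's proof actually invokes $U$ being a \emph{projectively invariant} $1$-homogeneous function, i.e.\ $\tilde{U} = U$. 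Your reading of $U$ as a spray-independent fixed function makes this automatic, and it is consistent with the statement as literally worded; but if $U$ is associated with the metric (e.g.\ built from $F$), invariance is a substantive hypothesis, and any complete proof should state which reading is being used rather than treat the cancellation as free.
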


\begin{proof}

This spray \(PG\) remains invariant under projective changes of the form
\[
\tilde{G}^i = G^i + P y^i,
\]
with the volume form \(dV_F = \sigma(x) dx^1 \wedge \cdots \wedge dx^n\), and a fixed volume form \(dV_0 = \sigma_0(x) dx^1 \wedge \cdots \wedge dx^n\).
Noting that
\[
\tilde{S} = \tilde{G}^m_{.m} - d \ln(\tilde{\sigma}) = G^m_{.m} + (n+1) P - d \ln(\tilde{\sigma}) = S + (n+1) P + d \ln\left( \frac{\sigma_0}{\sigma} \right) - d \ln\left( \frac{\sigma_0}{\tilde{\sigma}} \right).
\]
Therefore,
\[
\frac{1}{n+1}(\tilde{\mathbb{S}} - \mathbb{S}) = P.
\]
More precisely, for any 1-homogeneous projective invariant function \(U = U(x,y)\), we obtain
\[
\rho_{\tilde{G}} = \frac{\tilde{\mathbb{S}}}{n+1} + U = \frac{\mathbb{S}}{n+1} + U + P = \rho_G + P,
\]
which confirms the condition in Definition \ref{GPSprayDef}.
\end{proof}

The above proposition naturally includes the weighted projective Ricci curvature framework introduced by \cite{TabNajTay} as a special case (with \(U=0\)). Accordingly, the following corollary holds.
\begin{cor}\label{GPSprayGS-cur}
Let \((M,F)\) be as above. For any smooth function \(f=f(x)\) on \(M\), the spray
\[
PG^i = G^i - \left( \frac{\mathbb{S}}{n+1} + f_{;0} \right) y^i,
\]
defines a generalized projective spray, where the notation is as before.
\end{cor}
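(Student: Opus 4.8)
The plan is to recognize this corollary as the special case $U = f_{;0}$ of Proposition \ref{rhoPropGS-cur}, so that the entire task reduces to verifying that $U = f_{;0}$ satisfies the two hypotheses imposed on $U$ in that proposition (as used in its proof): positive $1$-homogeneity in $y$, and projective invariance. Once both are confirmed, the conclusion $\rho_{\tilde{G}} = \rho_G + P$ follows verbatim from the proposition, and the generalized projective spray structure follows from Definition \ref{GPSprayDef}.

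First I would write out $U = f_{;0} = f_{;m}\, y^m = \frac{\partial f}{\partial x^m}\, y^m$, using the paper's convention that ${}_{;m}$ denotes the ordinary partial derivative with respect to $x^m$ and that the subscript $0$ denotes contraction with $y^m$. Since $f = f(x)$ depends only on the base point, the coefficients $\partial f / \partial x^m$ are functions of $x$ alone, and therefore $U$ is linear, hence positively $1$-homogeneous, in the fiber variable $y$. This settles the first hypothesis at once.

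The second point is projective invariance, and this is where the only genuine observation lies. A projective change $\tilde{G}^i = G^i + P y^i$ alters the spray coefficients but affects neither the partial derivatives $\partial f / \partial x^m$ (which see only the position $x$) nor the fiber coordinates $y^m$; consequently $\tilde{U} = \frac{\partial f}{\partial x^m}\, y^m = U$, so $U$ is invariant under every projective change of $G$. Equivalently, for a function $f$ of $x$ alone the Berwald horizontal derivative $f_{|m}$ collapses to $\partial f / \partial x^m$, carrying no dependence on the nonlinear connection of $G$, which is precisely why $f_{;0}$ is spray-independent. I do not expect any serious obstacle: the corollary is essentially immediate once the notation is pinned down, and the single point requiring a moment of care is to confirm that $f_{;0}$ really is the contraction $f_{;m} y^m$ of the ordinary partial derivative, rather than a covariant derivative that would introduce connection terms sensitive to the projective change. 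With both hypotheses verified, Proposition \ref{rhoPropGS-cur} applies directly with this choice of $U$, completing the argument.
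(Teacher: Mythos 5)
Your proposal is correct and takes essentially the same route as the paper, which states the corollary as an immediate special case of Proposition \ref{rhoPropGS-cur} with $U=f_{;0}$ (``Accordingly, the following corollary holds''), giving no separate proof. The two checks you make explicit---that $f_{;0}=\frac{\partial f}{\partial x^m}y^m$ is linear, hence positively $1$-homogeneous, in $y$, and that it is projectively invariant because it involves no spray data (equivalently, $f_{|m}=f_{;m}$ for a function of $x$ alone)---are precisely what the paper leaves implicit, including the observation that the proposition's proof actually uses projective invariance of $U$, not just $1$-homogeneity.
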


Replacing \(S\)-curvature by suitable homogeneous functions thus broadens the construction of projective sprays and curvature concepts, enlarging the geometric horizon beyond volume distortion. Alternative 1-homogeneous functions represent varied geometric or physical properties, thus enabling new curvature invariants. We now introduce another projective spray and analyze its properties, starting with the following definition.

\begin{Def}\label{DPIDef}
Let \(V = V(x,y)\) be a 1-homogeneous function on the Finsler manifold \((M,F)\).
We define the spray directional derivative of \(V\) by
\[
\mathcal{D} V := (\ln V)_{.r} G^r,
\]
where \(G^r\) are the spray coefficients associated with the Finsler metric \(F\).
The function \(V\) is called directionally projectively invariant if for any projective change of spray \(\tilde{G}^i = G^i + P y^i\), with corresponding transformed function \(\tilde{V}\), we have
\[
\mathcal{D}_{\tilde{V}} V = \left(\ln \frac{\tilde{V}}{V} \right)_{.r} G^r = 0.
\]
\end{Def}

This condition signifies that \(V\) remains invariant under infinitesimal displacements along the geodesics defined by the spray under projective transformations, reflecting an intrinsic directional invariance with respect to the underlying projective structure.

\begin{rem}
Every 1-homogeneous function \(V=V(x,y)\) which is projectively invariant is also directionally projectively invariant.
\end{rem}

\begin{prop}\label{rhoPropDV}
Let \((M,F)\) be an \(n\)-dimensional Finsler manifold with induced spray \(G\). For any 1-homogeneous, directionally projectively invariant function \(V=V(x,y)\), the deformation
\[
PG^i := G^i - \mathcal{D}_V y^i,
\]
where \(\mathcal{D}_V\) is the spray directional derivative of \(V\), defines a generalized projective spray.
\end{prop}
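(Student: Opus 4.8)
The plan is to verify directly the compatibility condition of Definition \ref{GPSprayDef} with the choice $\rho_G = \mathcal{D}_V$, recalling that $\mathcal{D}_V = (\ln V)_{.r}\,G^r$. Concretely, I would fix an arbitrary projective change $\tilde{G}^i = G^i + P y^i$ and show that the projective factor $\rho_{\tilde{G}} := \mathcal{D}_{\tilde{V}}$, obtained by applying the very same spray directional derivative to the transformed function $\tilde{V}$ relative to the new spray $\tilde{G}$, differs from $\rho_G$ exactly by the projective factor $P$. Since $\rho_{\tilde{G}} = (\ln \tilde{V})_{.r}\,\tilde{G}^r$ and $\rho_G = (\ln V)_{.r}\,G^r$, the whole statement reduces to evaluating the difference of these two scalar expressions and isolating the term $P$.

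The first computational step is to insert the projective relation $\tilde{G}^r = G^r + P y^r$ into $\rho_{\tilde{G}}$ and split it as
\[
\rho_{\tilde{G}} = (\ln \tilde{V})_{.r}\,G^r + P\,(\ln \tilde{V})_{.r}\,y^r .
\]
The decisive point is the second summand: because $V$, and hence $\tilde{V}$, is positively $1$-homogeneous, Euler's theorem gives $\tilde{V}_{.r}\,y^r = \tilde{V}$, so that $(\ln \tilde{V})_{.r}\,y^r = \tilde{V}_{.r}y^r/\tilde{V} = 1$. This single application of homogeneity is the mechanism that manufactures the projective factor, and it is the crux of the argument; it yields $\rho_{\tilde{G}} = (\ln \tilde{V})_{.r}\,G^r + P$.

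Subtracting $\rho_G$ then produces
\[
\rho_{\tilde{G}} - \rho_G = \big[(\ln \tilde{V})_{.r} - (\ln V)_{.r}\big]\,G^r + P = \Big(\ln \tfrac{\tilde{V}}{V}\Big)_{.r}\,G^r + P .
\]
At this stage I would invoke the hypothesis that $V$ is directionally projectively invariant: by Definition \ref{DPIDef} the leading term $\big(\ln \tfrac{\tilde{V}}{V}\big)_{.r}\,G^r$ vanishes identically. What remains is $\rho_{\tilde{G}} - \rho_G = P$, which is precisely the compatibility relation demanded of a generalized projective spray, so $PG^i = G^i - \mathcal{D}_V\,y^i$ is invariant under every projective change of $G$, as claimed.

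I expect the only genuine obstacle to be bookkeeping rather than anything deep. One should check the homogeneity degrees to confirm that $\rho_G$ is a legitimate projective factor: $V$ is of degree $1$, so $V_{.r}$ is degree $0$, $(\ln V)_{.r} = V_{.r}/V$ is degree $-1$, and since $G^r$ is degree $2$ the product $\rho_G = (\ln V)_{.r}\,G^r$ is again degree $1$, exactly as required by Definition \ref{GPSprayDef}. One must also take care that the vertical derivative $._{r}$ is paired with the correct spray in each occurrence, and that the sign convention is kept consistent with the normalization $\tilde{G}^i = G^i + P y^i$ adopted in Definition \ref{GPSprayDef}; this is the one place where a careless sign would invert the conclusion, but the essential content is just Euler's identity followed by the directional-invariance hypothesis.
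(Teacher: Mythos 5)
Your proposal is correct and follows essentially the same route as the paper's own proof: apply Euler's identity to the $1$-homogeneous function $\tilde{V}$ (the paper writes this as $\tilde{V}_{.r}\tilde{G}^r = \tilde{V}_{.r}G^r + P\tilde{V}$, you as $(\ln\tilde{V})_{.r}y^r = 1$), then subtract to identify $\rho_{\tilde{G}} - \rho_G = \bigl(\ln\tfrac{\tilde{V}}{V}\bigr)_{.r}G^r + P = \mathcal{D}_{\tilde{V}}V + P$ and kill the first term by directional projective invariance. Your closing remark about signs is apt, since the paper's Definition~\ref{GPSprayDef} literally states $\rho_G - \rho_{\tilde{G}} = P$ while its own proofs (including this one) conclude $\rho_{\tilde{G}} - \rho_G = P$; your derivation agrees with the latter, which is the version actually forced by $P\tilde{G}^i = PG^i$ under the convention $\tilde{G}^i = G^i + Py^i$.
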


\begin{proof}
Since \(V\) is directionally projectively invariant, under any projective change \(\tilde{G}^i = G^i + P y^i\), the spray directional derivatives satisfy
\[
\tilde{V}_{.r} \tilde{G}^r = \tilde{V}_{.r} G^r + P \tilde{V}.
\]
Hence,
\[
\rho_{\tilde{G}} - \rho_G = (\ln \tilde{V})_{.r} \tilde{G}^r - (\ln V)_{.r} G^r = \left[ \ln \tilde{V} - \ln V \right]_{.r} G^r + P = \mathcal{D}_{\tilde{V}} V + P.
\]
By the directional projective invariance of \(V\), \(\mathcal{D}_{\tilde{V}} V=0\), satisfying the condition in Definition \ref{GPSprayDef}.
\end{proof}

Based on these generalized sprays, the concepts of generalized projective Riemann and Ricci curvatures can be extended \cite{SadeghPR, ShenPRicFlat, ZhuPR}. In particular, we define

\begin{Def}\label{GPRicSprayDef}
Let \(G\) be a spray on an \(n\)-dimensional Finsler manifold \(M\). The generalized projective Riemann curvature with respect to \(\rho_G\) is defined as the Riemann curvature \(\{PR{^i}_k\}\) of the generalized projective spray \(PG\).
\end{Def}
\begin{Def}\label{GPR-quadSprayDef}
Let \(G\) be a spray on an \(n\)-dimensional Finsler manifold \(M\). The generalized projective Ricci curvature with respect to \(\rho_G\) is defined as the Ricci curvature
\[
PRic = Ric + \rho_G^2 + \rho_{G|\;k}.
\]
\end{Def}

\begin{Def}\label{GPR-quadSprayDef}
Let \( G \) be a spray on an \( n \)-dimensional Finsler manifold \( M \). Generalized projective Ricci curvature with respect to $\rho_G$ is defined as the Ricci curvature \(PRic\) of the associated generalized projective spray with respect to \(\rho_G\). In particular,
\be\label{PRicDef}
PRic=Ric + \rho_G^2+\rho_G\,_{\mid k}.
\ee
\end{Def}

With these definitions and the propositions \ref{rhoPropGS-cur} and \ref{rhoPropDV}, the generalized projective Riemann and Ricci curvatures take the following explicit forms.
\begin{Def}\label{PRicDefGS-Cur}
Let \(G\) be a spray on an \(n\)-dimensional Finsler manifold \(M\). For any 1-homogeneous projective invariant function \(U = U(x,y)\), the generalized projective Ricci curvature is
\[
PRic = Ric + \frac{1}{n+1} \left( \frac{\mathbb{S}^2}{n+1} + \mathbb{S}_{|0} \right) + U^{2} + U_{|0},
\]
where \(\mathbb{S}\) is as introduced in Proposition \ref{rhoPropGS-cur}.
\end{Def}
\begin{Def}\label{PRicDefDV}
Let \(G\) be a spray on an \(n\)-dimensional Finsler manifold \(M\). For any 1-homogeneous, directionally projectively invariant function \(V = V(x,y)\), the generalized projective Ricci curvature is
\[
PRic = Ric + \frac{1}{n+1} \left( (\mathcal{D}V)^2 + \mathcal{D}V_{|0} \right),
\]
where \(\mathcal{D}V\) is as introduced in Proposition \ref{rhoPropDV}.
\end{Def}

Finally, we define important classes of generalized projective Finsler metrics as follows:
\begin{Def}
A Finsler metric \(F\) on \(M\) is called generalized projectively Ricci quadratic (respectively, Ricci-flat) with respect to \(\rho_G\) if the induced spray \(PG\) associated to \(\rho_G\) is projectively Ricci quadratic (respectively, Ricci-flat).
\end{Def}
\begin{Def}
A Finsler metric \(F\) is generalized projectively \(R\)-quadratic with respect to \(\rho_G\) if the induced spray \(PG\) corresponding to \(\rho_G\) is projectively \(R\)-quadratic.
\end{Def}
\begin{rem}\label{Remark1}
For brevity, we denote these notions by \(GPRic\)-quadratic and \(GPR\)-quadratic respectively.
\end{rem}

\subsection{Proof of Theorem \ref{Thm1}}

\begin{proof}
Assume that \(G\) is a spray on an \(n\)-dimensional Finsler manifold \((M,F)\). Consider the generalized projective spray \(PG\) defined by
\be\label{Gi}
PG^i = G^i - \rho_G y^i,
\ee
where \(\rho_G = \rho_G(x,y)\) is a smooth, 1-homogeneous function on \(TM\). Differentiating successively with respect to \(y^j\), \(y^k\), and \(y^l\), we obtain
\begin{equation}\label{Gij}
PG^{i}_{j} = G^{i}_{j} - \rho_G \delta^{i}_{j} - \rho_{G\,.j} y^{i},
\end{equation}
\begin{equation}\label{Gijk}
PG^{i}_{jk} = G^{i}_{jk} - \rho_{G\,.k} \delta^{i}_{j} - \rho_{G\,.j} \delta^{i}_{k} - \rho_{G\,.j.k} y^{i},
\end{equation}
and
\begin{equation}\label{Bjikl}
PB_j{}^{i}{}_{kl} = B_j{}^{i}{}_{kl} - \rho_{G\,.k.l} \delta^{i}_{j} - \rho_{G\,.j.l} \delta^{i}_{k} - \rho_{G\,.k.j} \delta^{i}_{l} - \rho_{G\,.j.k.l} y^{i}.
\end{equation}

Recall the Berwald curvature of the spray \(PG\) is defined by
\[
PB_j{}^i{}_{kl} = \frac{\partial^3 PG^i}{\partial y^j \partial y^k \partial y^l}.
\]
Using the above differentiation, we have
\begin{equation}\label{horizontalBandBbarGbar}
PB_j{}^{i}{}_{kl||m} = B_j{}^{i}{}_{kl||m} - \rho_{G\,.k.l||m} \delta^{i}_{j} - \rho_{G\,.j.l||m} \delta^{i}_{k} - \rho_{G\,.k.j||m} \delta^{i}_{l} - \rho_{G\,.j.k.l||m} y^{i}.
\end{equation}
where \(B_j{}^i{}_{kl}\) is the Berwald curvature tensor of \(G\).

Using equations \eqref{Gi}, \eqref{Gij}, and \eqref{Gijk}, one derives
\[
B_j{}^{i}{}_{kl||m} = B_j{}^{i}{}_{kl|m} + B_j{}^{i}{}_{kl.r}(\rho_G \delta^{r}_{m} + \rho_{G\,.m} y^{r}) + B_r{}^{i}{}_{kl} (\rho_{G\,.m} \delta^{r}_{j} + \rho_{G\,.j} \delta^{r}_{m} + \rho_{G\,.j.m} y^{r})
\]
\[
+ B_j{}^{i}{}_{rl} (\rho_{G\,.m} \delta^{r}_{k} + \rho_{G\,.k} \delta^{r}_{m} + \rho_{G\,.k.m} y^{r}) + B_j{}^{i}{}_{kr} (\rho_{G\,.m} \delta^{r}_{l} + \rho_{G\,.l} \delta^{r}_{m} + \rho_{G\,.l.m} y^{r})
\]
\[
- B_j{}^{r}{}_{kl} (\rho_{G\,.r} \delta^{i}_{m} + \rho_{G\,.m} \delta^{i}_{r} + \rho_{G\,.r.m} y^{i}).
\]

Hence,
\begin{equation}\label{horizontalBGbar}
\begin{aligned}
B_j{}^{i}{}_{kl||m} = & \, B_j{}^{i}{}_{kl|m} + \rho_G B_j{}^{i}{}_{kl.m} + \rho_{G\,.m} B_j{}^{i}{}_{kl} + \rho_{G\,.j} B_m{}^{i}{}_{kl} \\
& + \rho_{G\,.k} B_j{}^{i}{}_{ml} + \rho_{G\,.l} B_j{}^{i}{}_{km} - \rho_{G\,.r} B_j{}^{r}{}_{kl} \delta^{i}_{m} - \rho_{G\,.m.r} B_j{}^{r}{}_{kl} y^{i}.
\end{aligned}
\end{equation}

Contracting with \(y^{m}\) yields
\begin{equation}\label{horizontalBGbar0}
B_j{}^{i}{}_{kl||m} y^{m} - B_j{}^{i}{}_{kl|m} y^{m} = -\rho_{G\,.r} B_j{}^{r}{}_{kl} y^{i}.
\end{equation}

Combining \eqref{horizontalBandBbarGbar} and \eqref{horizontalBGbar0}, we get
\begin{equation}\label{horizantalBGandGbar}
PB_j{}^{i}{}_{kl||0} = B_j{}^{i}{}_{kl|0} - \rho_{G\,.j.l||0} \delta^{i}_{k} - \rho_{G\,.k.l||0} \delta^{i}_{j} - \rho_{G\,.j.k||0} \delta^{i}_{l} - (\rho_{G\,.j.k.l||0} + \rho_{G\,.r} B_j{}^{r}{}_{kl}) y^{i}.
\end{equation}

Using \eqref{Rie-BerId}, we find
\[
PB_j{}^{i}{}_{kl||m} - PB_j{}^{i}{}_{km||l} = PR_j{}^{i}{}_{ml.k}.
\]

Contracting above with \(y^{m}\) and invoking \eqref{horizantalBGandGbar} leads to
\begin{equation}\label{GPRandR}
\begin{aligned}
PR_j{}^{i}{}_{ml.k} y^{m} = & \, B_j{}^{i}{}_{kl|0} - \rho_{G\,.j.l|0} \delta^{i}_{k} - \rho_{G\,.k.l|0} \delta^{i}_{j} - \rho_{G\,.j.k|0} \delta^{i}_{l} - \rho_{G\,.j.k.l|0} y^{i} \\
& - \Big[\rho_{G\,.r} B_j{}^{r}{}_{kl}-\big(\frac{\rho_G^{2}}{2}\big)_{.j.k.l} \Big] y^{i}.
\end{aligned}
\end{equation}
Here, we have used \eqref{pjklh0} that is proved in the following.

Note that \(\rho_{G\,.j.k||0} = \rho_{G\,.j.k|0}\). Using \eqref{Gij}, \eqref{Gijk}, and related calculations, we conclude
\begin{equation}\label{pjkhl}
\begin{aligned}
\rho_{G\,.j.k||l} = & \, \rho_{G\,.j.k|l} + \rho_G \rho_{G\,.j.k.l} + \rho_{G\,.l} \rho_{G\,.j.k} + \rho_{G\,.j} \rho_{G\,.l.k} + \rho_{G\,.k} \rho_{G\,.j.l} \\
= & \, \rho_{G\,.j.k|l} + \big( \frac{\rho_G^2}{2} \big)_{.j.k.l}.
\end{aligned}
\end{equation}

Additionally,
\begin{equation}\label{pjkh0}
\rho_{G\,.j.k||0} = \rho_{G\,.j.k|0},
\end{equation}
and
\begin{equation}\label{pjklh0}
\begin{aligned}
\rho_{G\,.j.k.l||0} = & \, \rho_{G\,.j.k||0.l} - \rho_{G\,.j.k||l} = \rho_{G\,.j.k|0.l} - \rho_{G\,.j.k|l} - \big( \frac{\rho_G^2}{2} \big)_{.j.k.l} \\
= & \, \rho_{G\,.j.k.l|0} - \big( \frac{\rho_G^2}{2} \big)_{.j.k.l}.
\end{aligned}
\end{equation}

From \eqref{Bjikl}, it follows that
\begin{equation}\label{GPRandEq}
PR_j{}^{i}{}_{ml.k} y^{m} = PB_j{}^{i}{}_{kl|0} - \rho_{G\,.r} (PB_j{}^{r}{}_{kl}) y^{i}.
\end{equation}

Then one finds that $F$ is generalized projective $R$-quadratic with respect to $\rho_G$, if and only if \eqref{GPR-quadEq}.
\end{proof}

This Theorem, proven in the above, establishes the necessary and sufficient condition for a Finsler metric to be $GPR$-quadratic characteristics. The expression for the projective Riemann curvature, as stated in \cite{Sh2}, is derived from a special case of a projective change of Finsler metric $F$, where $P=-\frac{S}{n+1}$, and can be written as $\bar{G}^i=G^i-\frac{S}{n+1}$. Then
\[
\bar{R}{^i}_k=R{^i}_k+E\delta{^i}_k+\tau_k y^i,
\]
where
\[
E=\frac{S^2}{(n+1)^2}+\frac{S_{|0}}{n+1}, \qquad \tau_k=3(\frac{S_{|k}}{n+1}+\frac{S S_{.k}}{(n+1)^2})+E_{.k}.
\]
Here $"|"$ denotes the horizontal derivative with respect to $F$.

Then according to the previous Theorem one can prove the Corollary \ref{PR-quadCor}.

\subsection{Proof of Corollary \ref{PR-quadCor}}

\begin{proof}
The straightforward conclusion arises from the aforementioned theorem. Taking into account that $E_{jk}=\frac{1}{2}S_{.j.k}$ and deriving from \ref{Bjikl} and $P=-\frac{S}{n+1}$, it becomes evident that
\[
\bar{B}_j{^i}_{kl}=B_j{^i}_{kl}-\frac{2}{n+1}\{E_{jk}\delta{^i}_l+E_{jl}\delta{^i}_k+E_{lk}\delta{^i}_j+E_{jkl}y{^i}\}=D_j{^i}_{kl}.
\]
\end{proof}

As in Corollary \ref{PR-quadCor}, it is of significant interest to determine the necessary and sufficient conditions under which the two generalized projective sprays introduced in Proposition \ref{rhoPropGS-cur} and Proposition \ref{rhoPropDV} are of $GPR$-quadratic type. The following theorems establish these conditions explicitly, providing a detailed characterization of when these sprays exhibit the $GPR$-quadratic structure.

\begin{thm}\label{GPRThmS-cur}
Let \(F\) be a Finsler metric on an \(n\)-dimensional manifold \(M\). Then \(F\) is of \(GPR\)-quadratic type with respect to \(\rho_G= \frac{\mathbb{S}}{n+1} + U\), where \(U = U(x,y)\) is any 1-homogeneous projective invariant function and \(\mathbb{S}\) is as introduced in Proposition \ref{rhoPropGS-cur}, if and only if the following holds,
\[
D_j{^i}_{kl \mid 0} - \frac{S_{.r}}{n+1} D_j{^r}_{kl} y^i = U_{.j.k \mid 0} \delta^i_l + U_{.j.l \mid 0} \delta^i_k + U_{.k.l \mid 0} \delta^i_j
\]
\[
+ \Big[ U_{.j.k \mid 0} \delta^i_l + U_{.j.l \mid 0} \delta^i_k + U_{.k.l \mid 0} \delta^i_j - \big(\tfrac{U^2}{2} + \frac{S}{n+1} U\big)_{.j.k.l} \Big] y^i.
\]
\end{thm}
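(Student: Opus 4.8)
The plan is to reduce everything to the characterization \eqref{GPR-quadEq} of Theorem \ref{Thm1}, specialized to $\rho_G = \frac{\mathbb{S}}{n+1} + U$, and then to express the Berwald curvature $PB_j{}^{i}{}_{kl}$ of the associated generalized projective spray entirely in terms of the Douglas tensor $D_j{}^{i}{}_{kl}$ and the vertical derivatives of $U$. Thus the whole argument hinges on evaluating \eqref{Bjikl} for this particular $\rho_G$ and substituting the result into \eqref{GPR-quadEq}.

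First I would compute the vertical derivatives of $\rho_G$. Since $\mathbb{S} = S + d\ln(\sigma_0/\sigma)$ differs from $S$ only by a term that is linear (hence $1$-homogeneous) in $y$, this difference is killed by two vertical derivatives, so $\mathbb{S}_{.j.k} = S_{.j.k} = 2E_{jk}$ and $\mathbb{S}_{.j.k.l} = S_{.j.k.l} = 2E_{jk.l}$, using $E_{jk} = \frac{1}{2}S_{.j.k}$. Substituting $\rho_{G\,.j.k} = \frac{2}{n+1}E_{jk} + U_{.j.k}$ and $\rho_{G\,.j.k.l} = \frac{2}{n+1}E_{jk.l} + U_{.j.k.l}$ into \eqref{Bjikl} and recognizing the $E$-curvature combination as the Douglas tensor, the $\frac{2}{n+1}E_{jk.l}y^i$ contributions cancel and one obtains the clean identity
\[
PB_j{}^{i}{}_{kl} = D_j{}^{i}{}_{kl} - U_{.k.l}\delta^{i}_{j} - U_{.j.l}\delta^{i}_{k} - U_{.j.k}\delta^{i}_{l} - U_{.j.k.l}y^{i}.
\]
This is the crux of the proof; for $U=0$ it collapses to $PB_j{}^{i}{}_{kl} = D_j{}^{i}{}_{kl}$, recovering Corollary \ref{PR-quadCor}.

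Next I would feed this identity into \eqref{GPR-quadEq}. In the first term $PB_j{}^{i}{}_{kl|0}$ the horizontal derivatives of the Kronecker deltas vanish and $y^i$ is horizontally parallel, so the derivative passes cleanly through the deltas and through $y^i$. In the second term $\rho_{G\,.r}(PB_j{}^{r}{}_{kl})y^i$ I would use $\rho_{G\,.r} = \frac{S_{.r}}{n+1} + U_{.r}$ (the $\mathbb{S}$-versus-$S$ discrepancy entering only through the $x$-dependent $\partial_{x^r}\ln(\sigma_0/\sigma)$, which drops out under the normalization $dV_0 = dV_F$) together with Euler's relation $\rho_{G\,.r}y^r = \rho_G$, so that the $U_{.j.k.l}y^r$ piece contracts to $\rho_G U_{.j.k.l}$. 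Splitting the outcome into the part carrying a free Kronecker delta and the part proportional to $y^i$, and transferring the Douglas contraction $\frac{S_{.r}}{n+1}D_j{}^{r}{}_{kl}y^i$ to the left-hand side, produces an equation of the stated shape. The pure-$\tfrac{S}{n+1}$ contributions reproduce exactly the Douglas relation of Corollary \ref{PR-quadCor}, so they assemble into $D_j{}^{i}{}_{kl|0} - \frac{S_{.r}}{n+1}D_j{}^{r}{}_{kl}y^i$, isolating the genuinely new $U$-dependent content.

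The main obstacle I anticipate is the bookkeeping of the $y^i$-proportional terms. After substitution these consist of the horizontal derivative $U_{.j.k.l|0}$, the Berwald contraction $U_{.r}B_j{}^{r}{}_{kl}$, and a family of cubic products $U_{.j}U_{.k.l}$, $U\,U_{.j.k.l}$ together with the mixed $S$--$U$ terms $\tfrac{1}{n+1}S_{.j}U_{.k.l}$, $\tfrac{1}{n+1}S\,U_{.j.k.l}$ and all their index permutations. The delicate step is to recognize, via the Leibniz rule applied three times (equivalently, via the expansion $(\tfrac{\rho_G^2}{2})_{.j.k.l} = \rho_{G\,.j}\rho_{G\,.k.l} + \rho_{G\,.k}\rho_{G\,.j.l} + \rho_{G\,.l}\rho_{G\,.j.k} + \rho_G\rho_{G\,.j.k.l}$ underlying \eqref{pjkhl}--\eqref{pjklh0}), that the quadratic and cross terms coalesce precisely into $\big(\tfrac{U^2}{2} + \tfrac{S}{n+1}U\big)_{.j.k.l}$. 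Tracking every permutation of the mixed third derivatives, and keeping the horizontal derivative with respect to $G$ distinct from that with respect to $PG$ as in the proof of Theorem \ref{Thm1}, is where the computation requires the most care.
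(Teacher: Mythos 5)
Your proposal is correct and takes essentially the same route as the paper: reduction to Theorem \ref{Thm1}, the key identity $PB_j{}^{i}{}_{kl}=D_j{}^{i}{}_{kl}-U_{.j.k}\delta^i_l-U_{.j.l}\delta^i_k-U_{.k.l}\delta^i_j-U_{.j.k.l}y^i$ (which is precisely the displayed step in the paper's proof), followed by expansion of $\rho_{G\,.r}\,(PB_j{}^{r}{}_{kl})\,y^i$ via Euler's relation and regrouping of the cubic and mixed terms into $\big(\tfrac{U^2}{2}+\tfrac{S}{n+1}U\big)_{.j.k.l}$, with the conversion $U_{.r}D_j{}^{r}{}_{kl}=U_{.r}B_j{}^{r}{}_{kl}-\tfrac{1}{n+1}\big(US_{.j.k.l}+U_{.j}S_{.k.l}+U_{.k}S_{.j.l}+U_{.l}S_{.j.k}\big)$ supplying the other half of that product-rule expansion. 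The discrepancies are in your favor: your $y^i$-bracket correctly reads $U_{.j.k.l|0}+U_{.r}B_j{}^{r}{}_{kl}-\big(\tfrac{U^2}{2}+\tfrac{S}{n+1}U\big)_{.j.k.l}$, whereas the paper's printed statement and proof instead repeat $U_{.j.k|0}\delta^i_l+U_{.j.l|0}\delta^i_k+U_{.k.l|0}\delta^i_j$ inside the bracket multiplying $y^i$ (an index typo, since $i$ is already free there), and your explicit normalization $dV_0=dV_F$ at least flags the $\mathbb{S}$-versus-$S$ issue (namely $\mathbb{S}_{.r}=S_{.r}+\partial_{x^r}\ln(\sigma_0/\sigma)$, which affects the contraction with $D_j{}^{r}{}_{kl}$) that the paper glosses over silently.
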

\begin{proof}
Using Theorem \ref{Thm1} and noting that
\[
PB_{j{^i}_{kl}} = (PG^i)_{.j.k.l} = D_j{^i}_{kl} - U_{.j.k} \delta^i_l - U_{.j.l} \delta^i_k - U_{.k.l} \delta^i_j - U_{.j.k.l} y^i,
\]
we conclude that
\[
D_j{^i}_{kl \mid 0} - \frac{S_{.r}}{n+1} D_j{^r}_{kl} y^i = U_{.j.k \mid 0} \delta^i_l + U_{.j.l \mid 0} \delta^i_k + U_{.k.l \mid 0} \delta^i_j
\]
\[
+ U_{.j.k \mid 0} \delta^i_l + U_{.j.l \mid 0} \delta^i_k + U_{.k.l \mid 0} \delta^i_j- U_{.r} \big( U_{.j.k} \delta^r_l + U_{.j.l} \delta^r_k + U_{.k.l} \delta^r_j + U_{.j.k.l} y^r \big)
\]
\[
- \frac{1}{n+1} \big( U S_{.j.k.l} + U_{.j} S_{.k.l} + U_{.k} S_{.j.l} + U_{.l} S_{.j.k} + S U_{.j.k.l} + S_{.j} U_{.k.l} + S_{.k} U_{.j.l} + S_{.l} U_{.j.k} \big),
\]
which completes the proof of the theorem.
\end{proof}

The next theorem,
\begin{thm}\label{GPRThmDV}
Let \(F\) be a Finsler metric on an \(n\)-dimensional manifold \(M\). Then \(F\) is of \(GPR\)-quadratic type with respect to \(\rho_G= DV\), where \(V = V(x,y)\) is any 1-homogeneous directionally projective invariant function, if and only if the following holds,
\[
B_j{^i}_{kl \mid 0} - {\mathcal{D}V}_{.r} B_j{^r}_{kl}\ y^i = {\mathcal{D}V}_{.j.k \mid 0} \delta^i_l + {\mathcal{D}V}_{.j.l \mid 0} \delta^i_k + \big[{\mathcal{D}V}_{.k.l \mid 0} \delta^i_j
\]
\[
+ {\mathcal{D}V}_{.j.k.l\mid 0} + \frac{1}{2} ({\mathcal{D}V}^2)_{.j.k.l}\big] y^i.
\]
\end{thm}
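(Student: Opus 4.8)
The plan is to obtain Theorem \ref{GPRThmDV} as the specialization of Theorem \ref{Thm1} to the generalized projective spray singled out in Proposition \ref{rhoPropDV}, namely the choice $\rho_G = \mathcal{D}V$ with $V$ a $1$-homogeneous directionally projectively invariant function. Proposition \ref{rhoPropDV} guarantees that $PG^i = G^i - \mathcal{D}V\, y^i$ is then a genuine generalized projective spray in the sense of Definition \ref{GPSprayDef}, so Theorem \ref{Thm1} applies with this $\rho_G$ and states that $F$ is $GPR$-quadratic with respect to $\mathcal{D}V$ if and only if
\[
PB_j{}^{i}{}_{kl|0} - (\mathcal{D}V)_{.r}\,(PB_j{}^{r}{}_{kl})\, y^{i} = 0 .
\]
Everything then reduces to expanding the two terms for $\rho_G = \mathcal{D}V$ and reorganizing the result.

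For the first term I would start from the Berwald curvature formula \eqref{Bjikl}, which for $\rho_G = \mathcal{D}V$ reads
\[
PB_j{}^{i}{}_{kl} = B_j{}^{i}{}_{kl} - (\mathcal{D}V)_{.k.l}\,\delta^{i}_{j} - (\mathcal{D}V)_{.j.l}\,\delta^{i}_{k} - (\mathcal{D}V)_{.j.k}\,\delta^{i}_{l} - (\mathcal{D}V)_{.j.k.l}\, y^{i}.
\]
Applying the horizontal derivative $|0$ and using that $\delta^{i}_{j}$ and $y^{i}$ are parallel for the Berwald connection leaves each coefficient simply differentiated, producing the three $\delta$-terms together with the $(\mathcal{D}V)_{.j.k.l|0}\, y^{i}$ term. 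For the second term I would contract $PB_j{}^{r}{}_{kl}$ with $(\mathcal{D}V)_{.r}$; the decisive input is that $\mathcal{D}V = (\ln V)_{.r} G^r$ is positively $1$-homogeneous in $y$, being the contraction of the $(-1)$-homogeneous $(\ln V)_{.r}$ with the $2$-homogeneous $G^r$, so Euler's relation gives $(\mathcal{D}V)_{.r}\, y^r = \mathcal{D}V$. Combining this with the product-rule collapse
\[
(\mathcal{D}V)_{.j}(\mathcal{D}V)_{.k.l} + (\mathcal{D}V)_{.k}(\mathcal{D}V)_{.j.l} + (\mathcal{D}V)_{.l}(\mathcal{D}V)_{.j.k} + (\mathcal{D}V)\,(\mathcal{D}V)_{.j.k.l} = \tfrac{1}{2}\big((\mathcal{D}V)^2\big)_{.j.k.l},
\]
which is exactly the identity already used in \eqref{pjkhl}, yields
\[
(\mathcal{D}V)_{.r}\, PB_j{}^{r}{}_{kl} = (\mathcal{D}V)_{.r}\, B_j{}^{r}{}_{kl} - \tfrac{1}{2}\big((\mathcal{D}V)^2\big)_{.j.k.l}.
\]
Substituting both expansions into the criterion above and transferring $(\mathcal{D}V)_{.r} B_j{}^{r}{}_{kl}\, y^i$ to the left-hand side then reproduces the stated identity.

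Because Theorem \ref{Thm1} is already available, the computation is essentially mechanical, and the only genuinely delicate points are two. First, one must confirm the $1$-homogeneity of $\mathcal{D}V$ so that Euler's relation legitimately replaces $(\mathcal{D}V)_{.r} y^r$ by $\mathcal{D}V$ in the radial contraction; without this the quadratic term does not close up. Second, and more importantly, one must keep careful track of the passage between the horizontal derivative $\|$ taken with respect to $PG$ and the derivative $|$ taken with respect to $G$, which underlies \eqref{GPRandEq}; it is precisely this passage, encoded in \eqref{pjkh0} and \eqref{pjklh0}, that generates the quadratic scalar $\big((\mathcal{D}V)^2\big)_{.j.k.l}$ sitting next to $(\mathcal{D}V)_{.j.k.l|0}$ in the $y^i$-coefficient. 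I would therefore read off the sign of that quadratic term directly from \eqref{pjklh0}, since this conversion is the only place where a sign error could enter.
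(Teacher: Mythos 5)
Your overall route is exactly the paper's: the paper disposes of this theorem with the remark that the proof ``proceeds analogously to that of the previous theorem,'' i.e.\ one specializes Theorem \ref{Thm1} to the spray of Proposition \ref{rhoPropDV} and expands, which is precisely your plan. Your computational inputs are also correct: the expansion of $PB_j{}^{i}{}_{kl}$ from \eqref{Bjikl} with $\rho_G=\mathcal{D}V$, its horizontal derivative (using that $\delta^i_j$ and $y^i$ are parallel for the Berwald connection of $G$), the $1$-homogeneity of $\mathcal{D}V$, and the contraction identity
\[
(\mathcal{D}V)_{.r}\,PB_j{}^{r}{}_{kl} \;=\; (\mathcal{D}V)_{.r}\,B_j{}^{r}{}_{kl} \;-\; \tfrac{1}{2}\big((\mathcal{D}V)^2\big)_{.j.k.l},
\]
which you justify correctly via Euler's relation and the product rule.

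The problem is your closing claim that substituting these expansions into \eqref{GPR-quadEq} ``reproduces the stated identity.'' It does not reproduce the statement as printed. Carrying the substitution through gives
\[
B_j{}^{i}{}_{kl|0} - (\mathcal{D}V)_{.r} B_j{}^{r}{}_{kl}\, y^i
= (\mathcal{D}V)_{.j.k|0}\,\delta^i_l + (\mathcal{D}V)_{.j.l|0}\,\delta^i_k + (\mathcal{D}V)_{.k.l|0}\,\delta^i_j
+ \Big[(\mathcal{D}V)_{.j.k.l|0} - \tfrac{1}{2}\big((\mathcal{D}V)^2\big)_{.j.k.l}\Big] y^i,
\]
i.e.\ the quadratic term enters with a \emph{minus} sign, whereas the theorem as printed has $+\tfrac{1}{2}\big((\mathcal{D}V)^2\big)_{.j.k.l}$ (and also misplaces the bracket so that the $\delta^i_j$ term is multiplied by $y^i$, which is index-inconsistent). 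The minus sign is the correct one: it is forced by your own contraction identity above, and it matches the analogous term $-\big(\tfrac{U^2}{2}+\tfrac{S}{n+1}U\big)_{.j.k.l}$ in Theorem \ref{GPRThmS-cur}, so the printed ``$+$'' should be regarded as a typo. Relatedly, your final paragraph defers the sign question to \eqref{pjklh0}; that is misdirected in your own setup. Once Theorem \ref{Thm1} is used as a black box (with $|$ the horizontal derivative of $G$), equation \eqref{pjklh0} never enters -- it is internal to the proof of Theorem \ref{Thm1} -- and the sign is determined purely by the algebraic identity you already wrote down. You should have committed to the minus sign and stated that your argument proves the theorem with that correction, rather than asserting agreement with the printed formula while leaving the one genuinely delicate sign unresolved.
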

\begin{proof}
The proof proceeds analogously to that of the previous theorem.
\end{proof}

In summary, this paper has successfully introduced a robust generalization of projective Riemann and Ricci curvatures within the framework of Finsler geometry. By defining and characterizing the class of generalized projectively $R$-quadratic and Ricci quadratic metrics, we have extended classical curvature notions to a broader geometric context, revealing new structural insights and invariance properties under projective transformations. These advancements lay the groundwork for further explorations into the geometric and analytic properties of Finsler spaces and open avenues for potential applications in theoretical physics and differential geometry. Future work may explore explicit constructions, examples, and applications of these generalized projective curvatures in diverse settings

\section*{Statements and Declarations}
The authors declare that no funds, grants, or other support were received during the preparation of this manuscript.
\end{document}